\DeclareMathSymbol{\shortminus}{\mathbin}{AMSa}{"39}
\newcommand\reallywidehat[1]{%
\savestack{\tmpbox}{\stretchto{%
  \scaleto{%
    \scalerel*[\widthof{\ensuremath{#1}}]{\kern.1pt\mathchar"0362\kern.1pt}%
    {\rule{0ex}{\textheight}}%WIDTH-LIMITED CIRCUMFLEX
  }{\textheight}% 
}{2.4ex}}%
\stackon[-6.9pt]{#1}{\tmpbox}%
}
\def\OO{{\mathcal O}}
\def\R{{\mathbf R}}
\def\F{\mathcal{F}}
\def\G{\mathcal{G}}
\def\I{\mathcal{I}}
\def\J{\mathcal{J}}
\def\cP{\mathcal{P}}
\def\Pic0{{\rm Pic}^0}
\def\Aut0{{\rm Aut}^0}
\def\R{{\mathbf{R}}}
\def\*{{\underline *}}
\def\mk{\mathfrak}
\def\b{\mk{b}}
\def\a{\mk{a}}
\def\J{\mathcal{J}}
\DeclareMathOperator{\codim}{codim}
\DeclareSymbolFont{yhlargesymbols}{OMX}{yhex}{m}{n}
\DeclareMathAccent{\yhwidehat}{\mathord}{yhlargesymbols}{"62}
\theoremstyle{plain}
\newtheorem*{introtheorem}{Theorem}
\newtheorem*{introcorollary}{Corollary}
\newtheorem{theorem}{Theorem}[subsection]
\newtheorem{proposition/example}[theorem]{Proposition/Example}
\newtheorem{corollary}[theorem]{Corollary}
\newtheorem{lemma}[theorem]{Lemma}
\theoremstyle{definition}
\newtheorem{conjecture/question}[theorem]{Conjecture/Question}
\newtheorem{remark/definition}[theorem]{Remark/Definition}
\newtheorem{notation/assumptions}[theorem]{Assumptions/Notation}
\newtheorem{setting/notation}[theorem]{Setting and Notation}
\numberwithin{equation}{section}
\theoremstyle{remark}
\begin{document}

\title[Singularities of  base loci on abelian varieties]{Singularities of  base loci on abelian varieties}

 \author[G. Pareschi]{Giuseppe Pareschi}

%\address{}
\address{Department of Mathematics,  University of Rome Tor Vergata\\Italy}
\email{pareschi@mat.uniroma2.it}
 \thanks{
 The author is partially supported by  the MIUR Excellence Department Project MatMod@TOV awarded to the Department of Mathematics of the University of Rome Tor Vergata, and by PRIN 2022 "Moduli spaces and Birational Geometry". He is a member of GNSAGA - INDAM }

%\subjclass[2010]{14K25; 32G20}
\begin{abstract} We prove that  the log canonical threshold  of the base ideal of a complete linear system on a complex abelian variety is $\ge 1$, and that equality holds if and only if the base locus has divisorial components. Consequently the same assertions hold for the ideal of the intersection of translates of theta divisors by the points of a finite subgroup.
\end{abstract}

\maketitle

A well known theorem of Koll\'ar (\!\!\cite[Theorem 17.13]{kol1}) asserts that for a complex principally polarized abelian variety  the pair $(A,\Theta)$ is log-canonical. This simple and fundamental result has been subsequently sharpened by Ein and Lazarsfeld in \cite{el}, and there are still important  open conjectures in this field, see e.g.  \cite{cgs}, \cite[\S H.29]{mp}, \cite[\S 9]{sy} and references therein. In turn the theorems of Koll\'ar and Ein-Lazarsfeld have been extended to other integral and/or $\mathbb Q$-divisors on abelian varieties  in  \cite{hac1},\cite{hac2},\cite{dh},\cite{p}, \cite{b}, \cite{jl}. 

In this note Koll\'ar's result is extended in a different direction: we are looking at base loci of complete linear systems, and intersections of translates of theta divisors by the points belonging to a finite subgroup. 
We work over $\mathbb C$. For an effective line bundle $L$ on an abelian variety $A$ we denote $\b_L$ the base ideal sheaf of the complete linear system $|L|$, namely the image of the twisted evaluation map $H^0(A,L)\otimes L^{-1}\rightarrow \OO_A$. This is our result

\begin{introtheorem}\label{th:a}
 $lct(\b_L)\ge 1$ and equality holds if and only if the zero locus of $\b_L$ has  divisorial components.
\end{introtheorem}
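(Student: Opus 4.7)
The proof naturally has two layers: establishing $\mathrm{lct}(\mathfrak{b}_L)\ge 1$, and characterizing when equality holds. The ``only if'' direction of the equality is immediate: if $V(\mathfrak{b}_L)$ has a divisorial component supported on a reduced divisor $F>0$, then $\mathfrak{b}_L\subseteq\OO_A(-F)$ gives $\mathrm{lct}(\mathfrak{b}_L)\le\mathrm{lct}(F)\le 1$, which together with the main inequality forces equality.

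For the main inequality, I would first reduce to the case where $L$ is ample. Letting $K:=K^0(\phi_L)$ denote the identity component of the kernel of the polarization, the hypothesis $H^0(A,L)\neq 0$ together with degree considerations forces $L$ to be trivial on every translate of $K$, so $L=\pi^*M$ descends through $\pi\colon A\to A/K$ to an ample line bundle $M$. Smoothness of $\pi$ gives $\mathrm{lct}(\mathfrak{b}_L)=\mathrm{lct}(\mathfrak{b}_M)$, and the divisorial components of the base loci correspond.

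For $L$ ample I would then appeal to Mumford's theory of isotropic subgroups to produce an isogeny $\pi\colon A\to A''$ of degree $h^0(L)$ with $L=\pi^*\Xi$ for a principal polarization $\Xi$ on $A''$. The decomposition $\pi_*\OO_A=\bigoplus_{\chi}P_\chi$ over characters of $\ker\pi$ yields
\[
H^0(A,L)\;=\;\bigoplus_{\chi}H^0(A'',\Xi\otimes P_\chi),
\]
with each summand one-dimensional, spanned by a translate of the theta section. The corresponding divisors are $D_\chi=\pi^{*}(t_{a_\chi}^{*}\Xi)\in|L|$. By Koll\'ar's theorem, each $(A'',t_{a_\chi}^{*}\Xi)$ is log canonical; \'etale invariance of log canonicity under $\pi$ gives $(A,D_\chi)$ log canonical, whence $\mathrm{lct}(D_\chi)\ge 1$. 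Since $\OO_A(-D_\chi)\subseteq\mathfrak{b}_L$ (the unique section $s_\chi$ contributes to the base ideal), we obtain $\mathrm{lct}(\mathfrak{b}_L)\ge\mathrm{lct}(D_\chi)\ge 1$. A bonus of this construction is that it explicitly realizes the base locus of $|L|$ as $\pi^{-1}\bigl(\bigcap_{\chi}t_{a_\chi}^{*}\Xi\bigr)$, with $\{a_\chi\}$ ranging over a finite subgroup of $A''$; this is precisely the setup of the paper's second assertion.

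The main obstacle will be the strict inequality $\mathrm{lct}(\mathfrak{b}_L)>1$ in the case of no divisorial base components, since a single reduced divisor always has $\mathrm{lct}\le 1$ and hence no individual $D_\chi$ can witness it. Here one must genuinely exploit the full family $\{D_\chi\}_\chi$: the fact that their common intersection has codimension $\ge 2$ reflects a stronger positivity property of the theta-system under finite-group translations than is captured by any single member. I would approach this via Pareschi-Popa Fourier-Mukai / generic vanishing techniques---for $L$ ample, $L\otimes\alpha$ is globally generated for generic $\alpha\in\widehat{A}$, so the family $\{\mathfrak{b}_{L\otimes\alpha}\}$ is generically trivial---and combine this with Koll\'ar-type arguments applied to sums of translated theta divisors in order to establish $\mathcal{J}(\mathfrak{b}_L)=\OO_A$, which is equivalent to $\mathrm{lct}(\mathfrak{b}_L)>1$.
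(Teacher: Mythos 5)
Your argument for the inequality $\mathrm{lct}(\b_L)\ge 1$ is correct and takes a genuinely different route from the paper. After the (standard) reduction to $L$ ample, you descend $L$ along a maximal isotropic subgroup of $K(L)$ to a principal polarization $\Xi$ on $A''=A/H$, decompose $H^0(A,L)=\bigoplus_\chi H^0(A'',\Xi\otimes P_\chi)$, and observe that each one-dimensional summand contributes a divisor $D_\chi\in|L|$ which is the \'etale pullback of a translate of $\Xi$; Koll\'ar's theorem then gives $\OO_A(-D_\chi)\subseteq\b_L$ with $\mathrm{lct}(D_\chi)=1$, hence $\mathrm{lct}(\b_L)\ge 1$. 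This is more elementary than the paper's proof of the same inequality (which uses Nadel vanishing, a theta-group lemma and the Fourier--Mukai transform to show $\J(\b_L^c)=\OO_A$ for $c<1$), and it also yields the pleasant description of $\mathrm{Bs}\,|L|$ as the preimage of an intersection of translates of $\Xi$ by a finite subgroup, tying the Theorem to the Corollary. The ``only if'' half of the equality statement is likewise fine.

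The genuine gap is the substantive direction: if the zero locus of $\b_L$ has no divisorial components then $\mathrm{lct}(\b_L)>1$, i.e.\ $\J(\b_L)=\OO_A$. Your sketch for this rests on the assertion that for $L$ ample, $L\otimes P_\alpha$ is globally generated for generic $\alpha\in\widehat A$; this is false, since $L\otimes P_\alpha\cong t_x^*L$ for some $x\in A$, so its base locus is a translate of $\mathrm{Bs}\,|L|$ and is empty for one $\alpha$ if and only if it is empty for all (e.g.\ an ample $L$ with $h^0=2$ on a non-split abelian surface has a nonempty finite base locus in every translate). So the family $\{\b_{L\otimes P_\alpha}\}$ is never ``generically trivial'' unless $L$ is already free. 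Moreover, as you yourself note, no single $D_\chi$ can witness $\mathrm{lct}>1$, and ``combining with Koll\'ar-type arguments'' is not yet an argument. The paper's mechanism here is quite specific and is the real content of the result: $L\otimes\J(\b_L)$ is a direct summand of $f_*\omega_X$ for a generically finite $f:X\to A$ (Esnault--Viehweg), hence admits a Chen--Jiang decomposition; the $K(L)$-invariance of $\J(\b_L)$ together with Mumford's theta-group theorem (the paper's Lemma) shows $L\otimes\J(\b_L)$ is not M-regular, so the decomposition is a pullback $\pi_B^*\mathcal{G}$ from a proper quotient $B$; a Fourier--Mukai computation then identifies $\mathcal{G}\cong\pi_{B*}L$, so that $L\otimes\J(\b_L)\cong\pi_B^*\pi_{B*}L$ is locally free, forcing $\J(\b_L)$ to be a line bundle and the base locus to have a divisorial component. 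None of this (nor any substitute for it) appears in your proposal, so the theorem is not proved.
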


\noindent (When there are divisorial components the situation is completely understood, see the last part of the proof below). By a standard argument it follows that the conclusion of Theorem holds also for the (scheme-theoretic) intersection of translates of theta divisors of a principally polarized abelian variety $A$ by all points of a finite subgroup $H\subset A$.

 Here and in the sequel, given a morphism $f:X\rightarrow Y$ and an ideal sheaf $\a$ on $Y$ we will denote $f^{-1}\a$ the inverse image ideal sheaf on $X$, i.e. the ideal sheaf generated by the pullbacks of local sections of $\a$. The translation by a point $x\in A$ is denoted $t_x$. 

\begin{introcorollary} Let $A$ be a principally polarized abelian variety, $\Theta$ a theta divisor, and $H$ a finite subgroup of $A$. Let \[ \a_{\Theta,H}=\sum_{h\in H}t_h^{-1}\OO_A(-\Theta)\subset\OO_A.\]
 Then 
 \[
 lct(\a_{\Theta,H})\ge 1
 \]
  and equality holds if and only if the zero locus of $\a_{\Theta,H}$ has divisorial components.
\end{introcorollary}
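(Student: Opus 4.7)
The plan is to reduce the Corollary to Theorem~\ref{th:a} via descent through the quotient isogeny $\mu : A \to B := A/H$. Because the ideal $\a_{\Theta,H}$ is $H$-invariant (its $|H|$ summands are permuted by translations in $H$), it descends to an ideal sheaf $\bar{\a} \subseteq \OO_B$ with $\mu^{-1}\bar{\a} = \a_{\Theta,H}$. Since $\mu$ is étale, pullback preserves the log canonical threshold, so $lct(\a_{\Theta,H}) = lct(\bar{\a})$, and the zero locus of $\a_{\Theta,H}$ has divisorial components if and only if that of $\bar{\a}$ does. Hence it suffices to establish the Corollary for $\bar{\a}$ on $B$.

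The decisive step is to realise $\bar{\a}$ as the base ideal of a complete linear system on $B$. The $H$-invariant effective divisor $\sum_{h \in H}(\Theta - h)$ descends to an effective divisor $\bar{\Theta}$ on $B$, and $\mu^{*}\OO_B(\bar{\Theta}) = \OO_A(\sum_h(\Theta - h))$. Setting $L := \OO_B(\bar{\Theta})$, the goal is to prove the identification
\[
\bar{\a} \;=\; \b_L.
\]
Granting this, Theorem~\ref{th:a} applied to the effective line bundle $L$ on the abelian variety $B$ gives $lct(\b_L) \ge 1$, with equality iff the zero locus of $\b_L$ has divisorial components; transferring back along the étale $\mu$ yields the Corollary in the form stated.

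The main obstacle is the identification $\bar{\a} = \b_L$. Pulled back to $A$, it asserts that the subspace $\mu^{*}H^0(B, L) = H^0(A, \mu^{*}L)^H$ of $H$-invariant sections of $\mu^{*}L = \OO_A(\sum_h(\Theta - h))$ generates, in local trivialisations, precisely the ideal $\a_{\Theta,H}$. One direction asks that every $H$-invariant section of $\mu^{*}L$ vanish along the zero locus of $\a_{\Theta,H}$, an assertion about the $H$-equivariant structure on $\mu^{*}L$ restricted to $\bigcap_h(\Theta - h)$. The reverse direction requires producing sufficiently many $H$-invariant sections whose local equations near a generic point of that zero locus recover the generators $f_h$ of $\a_{\Theta,H}$, where $f_h$ is a local equation for $\Theta - h$. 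Such sections are built by multiplying, for each $h_0 \in H$, the unique theta section $\tau_{h_0} \in H^0(\OO_A(\Theta - h_0))$ by a varying section of $\OO_A(\sum_{h \neq h_0}(\Theta - h))$, and then averaging over the $H$-action to project onto the $H$-invariant subspace. The edge case in which the zero locus carries a divisorial component is handled as in the concluding paragraph of the proof of Theorem~\ref{th:a}, giving simultaneously the $lct = 1$ conclusion and the description of the base locus.
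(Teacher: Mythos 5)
The decisive step of your argument --- the identification $\bar{\a}=\b_L$ on $B=A/H$ --- is false, and the reduction collapses with it. The source of the problem is that $H^0(B,L)$ pulls back to a \emph{single} character eigenspace of the $H$-action on $H^0(A,\mu^*L)$: one of $|H|$ summands, each of dimension $|H|^{g-1}$, inside the $|H|^{g}$-dimensional space $H^0(A,\mu^*L)$. The generators of $\a_{\Theta,H}$, namely the individual theta sections of the $\OO_A(\Theta-h)$, are spread over all the eigenspaces, and your averaging device is exactly the projection onto the invariant eigenspace, so it cannot produce anything outside the too-small subspace $\mu^*H^0(B,L)$. Concrete counterexample: let $A=E$ be an elliptic curve, $\Theta=\{0\}$, $H=E[2]$. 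The four translates $\Theta-h$ are four distinct points, so $\a_{\Theta,H}=\OO_E$, its zero locus is empty and $lct(\a_{\Theta,H})=+\infty$. On the other hand $\bar\Theta$ is a single point of $B=E/E[2]$, $h^0(B,\OO_B(\bar\Theta))=1$, and $\b_L$ is the ideal of that point: a nontrivial divisorial ideal with $lct(\b_L)=1$. Your argument would therefore report $lct(\a_{\Theta,H})=1$ with divisorial zero locus, which is wrong. It is precisely the inclusion $\a_{\Theta,H}\subseteq\mu^{-1}\b_L$ --- the one you propose to obtain by averaging --- that fails here.

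The correct reduction goes in the opposite direction: instead of descending to the quotient $A/H$, the paper pulls back along the isogeny $\pi_K\colon B\to A$ dual to $\Pic0 A\to(\Pic0 A)/K$ with $K=\varphi_L(H)$, i.e.\ a \emph{covering} of $A$ rather than a quotient. Since every $P_\alpha$ with $\alpha\in K$ pulls back trivially under $\pi_K$, one has $H^0(B,\pi_K^*L)\cong\bigoplus_{\alpha\in K}\pi_K^*H^0(A,L\otimes P_\alpha)$: all the relevant eigenspaces appear simultaneously inside one complete linear system, so $\b_{\pi_K^*L}=\pi_K^{-1}\bigl(\sum_{h\in H}t_h^{-1}\b_L\bigr)$ holds with no further work, and the Theorem applied to $\pi_K^*L$ yields the Corollary via $\J(\b_{\pi_K^*L})=\pi_K^{-1}\J\bigl(\sum_{h\in H}t_h^{-1}\b_L\bigr)$. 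The parts of your proposal concerning the behaviour of $lct$ and of divisorial components under \'etale maps are fine; it is the choice of isogeny, and hence the identification of the relevant ideal with a base ideal, that must be reversed.
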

  When there are no divisorial components  we don't expect  our results to be  sharp or even close to it, and a better understanding of  the log canonical threshold (and more generally of the jumping numbers) of base ideals on abelian varieties seems to be an interesting and difficult problem. 

The motivation for this question comes from the attempt to show that $\b_L$ is a radical ideal sheaf, i.e. the base scheme of $L$ is reduced, or at least, as conjectured by Debarre in \cite[Conjecture 2]{d}, that the 2-codimensional components of the base scheme are generically reduced (see the erratum in \cite{acp}). Although our result does not imply this, we hope that the method we use will be helpful in future developments.

The result of this paper was obtained while I was working with Enrico Arbarello and Giulio Codogni on  the reducedness of the base scheme of line bundles on abelian varieties, and on the other topics of the erratum in \cite{acp}. I thank  them for their generosity, and for valuable conversations and suggestions. More recently, I have greatly benefitted from a continuous exchange with Arbarello. 
 Finally, I thank the referee for many suggestions allowing a better exposition.

%%%%%

\section{Background material on the Fourier-Mukai transform and generic vanishing}

\subsection{Notation} Given an abelian variety $A$ we set $g=\dim A$. Moreover we set $\widehat A=\Pic0 A$ 
 and $\cP$ the Poincar\'e line bundle on $A\times \widehat A$. Given a point $\alpha\in\widehat A$ we denote by $P_\alpha$ the corresponding line bundle on $A$. All sheaves appearing in the sequel are \emph{coherent} sheaves.

Given a sheaf $\F$ on $A$  its cohomological support loci are defined by
\[
V^i(A,\F)=\{\alpha\in\Pic0 A\>|\>h^i(A,\F\otimes P_\alpha)>0\}.
\]
\subsection{(Symmetric) Fourier-Mukai transform associated to the Poincar\'e line bundle}  Denoting $D(X)$ the bounded derived category of the category of coherent sheaves on a projective variety $X$, the Fourier-Mukai functor $\Phi^{A\rightarrow\widehat A}_{\cP}: D(A)\rightarrow D(\widehat A)$ (introduced in the seminal paper \cite{mukai}) is an exact equivalence. Its quasi-inverse is $\Phi^{\widehat A\rightarrow A}_{\cP^\vee[g]}$. 

For our purposes a version of this equivalence, introduced by Schnell in \cite{schnell}, will be more practical.  The \emph{symmetric} Fourier-Mukai transform is defined as the contravariant functor
\[
FM_A:D(A)\rightarrow D(\widehat A)^{op}, \quad FM_A=\Phi^{A\rightarrow\widehat A}_{\cP}\circ \Delta_A
\]
where $\Delta_A: D(A)\rightarrow D(A)^{op}$ is the dualizing functor $\Delta_A=\R\mathcal Hom(\>\cdot\>,\OO_A[g])$. Also $FM_A$ is an exact equivalence, with quasi inverse $FM_{\widehat A}$ (\!\!\cite[Theorem 1.1]{schnell}). 

Let $\pi:A\rightarrow B$ be a surjective homomorphism with connected fibers of abelian varieties and let $i:\widehat B\rightarrow \widehat A$ be the inclusion dual to $\pi$. By \cite[Proposition 1.1]{schnell} the following properties hold: 
\begin{equation}\label{1}FM_A\circ \pi^*=i_*\circ FM_B
\end{equation}
\begin{equation}\label{2}
FM_B\circ  L i^*=R\pi_*\circ FM_A .
\end{equation}
 \subsection{Vanishing conditions}\label{subs:van} Following \cite{schnell}, we have the following vanishing conditions on sheaves on abelian varieties.

 \noindent (a)   A sheaf $\F$ on $A$ is said to be a \emph{GV sheaf} (i.e. satisfies generic vanishing) if $FM_A(\F)$ is a sheaf (in degree zero). If this is the case we denote
\begin{equation}\label{3}
FM_A(\F)=\widehat \F
\end{equation}
It follows if $\F$ is a GV sheaf on $A$ then also $\widehat \F$ is a GV-sheaf on $\widehat A$ and $\widehat{\widehat \F}= \F$. 

If $\F$ is a GV sheaf then the fibre of $\widehat F$ at a point $\alpha\in\widehat A$ is $H^0(A,\F \otimes P_{-\alpha})^\vee$ (\!\!\cite[(10)]{schnell}. It follows that the (reduced structure of the) support  of $\widehat \F$ is the subvariety $-V^0(A,\F)$.

\noindent (b) $\F$ is said to be \emph{M-regular} if $\F$ is GV and $\widehat \F$ is a torsion free sheaf. Since the support of $\widehat\F$ is $-V^0(A,\F)$, if $\F$ is M-regular then $V^0(A,\F)=\widehat A$, i.e. $h^0(A,F\otimes P_\alpha)>0$, for all $\alpha\in \widehat A$.

\noindent (c) $\F$  is said to be \emph{an IT0  sheaf} (i.e. satisfies the index theorem with index 0)  if $\F$ is GV and $\widehat \F$ is a locally free sheaf.

Although not directly used in the sequel, it is perhaps useful to keep in mind the following well known characterizations of the above conditions in terms of the cohomological support loci (which are more commonly adopted as  definitions in the literature):
 
 \noindent  \emph{A coherent sheaf $\F$ is GV (respectively M-regular, IT0) if and only if $\codim_{\widehat A}V^i(A,\F)\ge i$  (resp. $>i$, empty) for all $i>0$. } 

\noindent For the GV condition this is \cite[Corollary 3.12]{gv} (but  one direction  was proved earlier in \cite{hac}). For the M-regularity condition this is \cite[Proposition 2.8]{reg3} and for the IT(0) condition in one direction it follows easily from base change and in the other one it follows e.g. from \cite[Remark 3.13]{gv}.

\subsection{Chen-Jiang decompositions and theorem}\label{subs:cy} Chen-Jiang decompositions were introduced by Z. Jiang and J. Chen in the paper \cite[Theorem 1.1]{cj}.  
A sheaf $\F$ on an abelian variety $A$ is said \emph{to admit a Chen-Jiang decomposition}
if 
\[
\F\cong \bigoplus_i(\pi_{B_i}^*\F_i)\otimes P_{\alpha_i}
\]
where: $\pi_{b_i}:A\rightarrow B_i$ are surjective homomorphisms with connected fibres, $\F_i$ are M-regular sheaves on $B_i$, and $\alpha_i\in \widehat A$ are points of finite order.  

Some remarks about this notion: 

\noindent (a) Chen-Jiang decomposition are essentially unique (\!\!\cite[Lemma 3.5] {lps} and comments after Proposition 3.3, or \cite[\S9]{clp}). 

\noindent  (b) Unless $\dim B_i=\dim A$ (i.e. the homomorphism with connected fibres $\pi_i$ is an isomorphism),  the pullbacks  $\pi_{B_i}^*\F_i$ of  the M-regular sheaves  $\F_i$ are GV but not M-regular (in fact by (\ref{1}) $FM_A(\pi_{B_i}^*\F_i)=i_{\widehat{B_i}*}\widehat{F_i}$ is a torsion sheaf). 

\noindent (c) Therefore a sheaf $\F$ admitting a Chen-Jiang decomposition is always GV, but it is M-regular if and only if the decomposition has $\F$ itself as the only  summand. 

\noindent (d) A direct summand of a sheaf admitting a Chen-Jiang decomposition  admits a Chen-Jiang decomposition (\!\!\cite[Proposition 3.6]{schnell}). 

The theorem of Chen-Jiang \cite[Theorem 1.1]{cj} (together with the subsequent generalization  in \cite{pps}) summarizes and strengthens a good deal of the generic vanishing theorems of Green-Lazarsfeld and Hacon (\!\!\cite{gl1},\cite{gl2}), \cite{hac}). It states that: 

\noindent \emph{If $f:X\rightarrow A$ is a generically finite morphism from a smooth variety to an abelian variety then the sheaf $f_*(\omega_X)$ admits a Chen-Jiang decomposition. }\\

%%%%%%

\section{Proofs}
 \subsection{A preliminary lemma. }  Let $L$ be an effective line bundle on an abelian variety $A$. Let $K(L)$ be the subgroup  $\{x\in A\>|\>t_x^*L\cong L\}$, acting on $A$ by translations. The line bundle $L$ is  ample if and only if $K(L)$ is a finite group. 
 
 \begin{lemma}\label{lemma} Let $L$ be an ample line bundle on an abelian variety $A$.  Let $\mathcal I\subset \OO_A$ be a $K(L)$-invariant ideal sheaf, i.e. $t_x^{-1}\I= \I$ for all $x\in K(L)$. Then, for $y\in A$, either $H^0(A,t_y^*L\otimes \mathcal I)=H^0(A,t_y^*L)$ or $H^0(A,t_y^*L\otimes \mathcal I)=0$. Moreover, if $\I$ is non-trivial then the Zariski open set of $y\in A$ such that $H^0(A,t_y^*L\otimes \mathcal I)=0$ is non-empty.
 \end{lemma}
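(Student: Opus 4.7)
My plan is to deploy the Mumford theta group and its irreducible action on sections. Since $L$ is ample, $K(L)$ is finite; denoting by $\G(L)$ the theta group, one has a central extension
\[ 1 \to \CC^* \to \G(L) \to K(L) \to 1, \]
and by Mumford $H^0(A,L)$ is an irreducible representation of $\G(L)$ on which $\CC^*$ acts by scalars. For any $y\in A$ the line bundle $t_y^*L$ is ample with $K(t_y^*L)=K(L)$, and $\G(t_y^*L)$ acts irreducibly on $H^0(A,t_y^*L)$.

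For the dichotomy I would verify that $H^0(A,t_y^*L\otimes\I)\subset H^0(A,t_y^*L)$ is $\G(t_y^*L)$-invariant. An element $(x,\phi)\in\G(t_y^*L)$, with $x\in K(L)$ and $\phi:t_y^*L\xrightarrow{\sim}t_{x+y}^*L$, acts by $s\mapsto \phi^{-1}(t_x^*s)$. If $s$ is locally in $\I\cdot t_y^*L$, then $t_x^*s$ is locally in $t_x^*\I\cdot t_{x+y}^*L=\I\cdot t_{x+y}^*L$ by the $K(L)$-invariance hypothesis (and the fact that $t_x^*\I=t_x^{-1}\I=\I$ for the isomorphism $t_x$), and applying the $\OO_A$-linear isomorphism $\phi^{-1}$ lands back in a local section of $\I\cdot t_y^*L$. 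Irreducibility then forces $H^0(A,t_y^*L\otimes\I)$ to be either $0$ or all of $H^0(A,t_y^*L)$.

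For the second assertion, I would argue by contradiction: assume $H^0(A,t_y^*L\otimes\I)=H^0(A,t_y^*L)$ for every $y\in A$. Then every global section of $t_y^*L$ vanishes set-theoretically on $V(\I)$, so $V(\I)$ sits in the base locus of $|t_y^*L|$, i.e.\ in the $(-y)$-translate of the base locus of $|L|$. Since $\I$ is non-trivial, $V(\I)\neq\emptyset$; fixing $x_0\in V(\I)$ and letting $y$ vary forces $x_0+y$ to lie in the base locus of $|L|$ for every $y$, i.e.\ the base locus equals $A$, contradicting ampleness. Upper semi-continuity of $h^0$ then ensures $\{y:H^0(A,t_y^*L\otimes\I)=0\}$ is Zariski open, and the above shows it is non-empty. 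The only delicate point I anticipate is checking the $\G(t_y^*L)$-invariance of the subspace cleanly from the hypothesis $t_x^{-1}\I=\I$; the rest is essentially formal once Mumford's irreducibility is in hand.
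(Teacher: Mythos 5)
Your proof is correct and follows essentially the same route as the paper: both arguments rest on Mumford's theta group $\mathcal G(t_y^*L)$ acting on $H^0(A,t_y^*L)$ with $\mathbb C^*$ acting by scalars, the $K(L)$-invariance of $\I$ making $H^0(A,t_y^*L\otimes\I)$ a subrepresentation, and irreducibility (equivalently, Mumford's uniqueness of the weight-one irreducible representation, as the paper phrases it) forcing the dichotomy; the non-emptiness argument via translates of the base locus is also the paper's. Your explicit verification of the $\mathcal G(t_y^*L)$-invariance of the subspace is the one step the paper leaves implicit, and it is done correctly.
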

 \begin{proof} Let us briefly recall, somewhat informally,  the following fundamental construction introduced in \cite{mumford} (we refer to the Mumford's paper for the appropriate invariant treatment). Let $\mathcal G(L)$ be the \emph{theta-group} of $L$, namely the group of pairs $(x,\varphi)$, where $x\in K(L)$ and $\varphi: L\rightarrow t_x^*L$ is an isomorphism. This theta-group sits in a central extension
 \begin{equation}\label{eq:0}
 0\rightarrow \mathbb C^* \rightarrow \mathcal G(L)\rightarrow K(L)\rightarrow 0.
 \end{equation}
 The group $K(L)$ does not act naturally on the space of global sections $H^0(A,L)$ but $\mathcal G(L)$ does, in such a way that $\mathbb C^*$ acts with weight $1$, i.e. by homoteties. Mumford's theorem (\!\!\cite[\S1, Proposition 3]{mumford}) states that, up to isomorphism, this is the only irreducible representation of $\mathcal G(L)$ where $\mathbb C^*$ acts with weight $1$. Of course   $K(t_y^*L)= K(L)$ for all points $y\in A$, and the extensions (\ref{eq:0}) for $L$ and $t_y^*L$ are naturally isomorphic, as well as the representations $H^0(A,L)$ and $H^0(A,t_y^*L)$. If $\I$ is a $K(L)$-invariant ideal sheaf then, for all $y\in A$, $H^0(A,t_y^*L\otimes \I)$  is a representation of $\mathcal G(L)$ where $\mathbb C^*$ acts with weight one, and $h^0(A,t_y^*L\otimes\I)\le h^0(A,L)$ . Therefore by Mumford's theorem either $h^0(A,t_y^*L\otimes\I)$ equals $ h^0(A,L)$ or it is zero. If $\I$ is non-trivial then $h^0(A,t_y^*L\otimes\I)$ must be zero for some $y$, otherwise the zero scheme of $\I$ would be contained in the base locus of all translates of $L$. 
 \end{proof}
 
 \subsection{Proof of the Theorem} Given a rational number $c>0$ we denote $\mathcal J(\b_L^c)$ the multiplier ideal sheaf associated to $c$ and $\b_L$. We refer to \cite{laz2} for generalities about multiplier ideal sheaves. 

The Theorem \ref{th:a} states that $\mathcal J(\b_L^c)$ is trivial as soon as $c<1$, and that $\mathcal J(\b_L)$ is trivial if and only if the zero locus of $\b$ has no divisorial components. 

In order to prove this,  in the first place we can assume that the effective line bundle $L$ is ample. Indeed otherwise, as it is well known, there is a surjective homomorphism with connected fibers $\pi:A\rightarrow B$ and an ample line bundle $M$ on  $B$ such that $L=\pi^*M$ (one takes $B=A/K_0(L)$, where $K_0(L)$ is the neutral component of $K(L)$).   Therefore $H^0(A,L)=\pi^*H^0(B,M)$, hence $\b_L=\pi^{-1}\b_M$. Thus $\J(\b_L^c))=
\pi^{-1}\J(\b_M^c)$ (\!\!\cite[Example 9.5.45]{laz2}).

The proof of the first assertion of the Theorem is standard. Let $L$ be an ample line bundle and $c<1$. By Nadel vanishing $V^i(L\otimes \mathcal J(\b_L^c))=\emptyset$ for $i>0$. Therefore the sheaf $L\otimes \mathcal J(\b_L^c)$ is IT0 and the transform $\yhwidehat{L\otimes \mathcal J(\b_L^c)}$ is locally free of rank
\[
h^0(A,L\otimes \mathcal J(\b_L^c)\otimes P_\alpha):= h^0.
\]
Let us recall that, for all $x\in A$, \ $t_x^*L\cong L\otimes P_\alpha$ for some $\alpha\in\widehat A$. If $\mathcal J(\b_L^c)$ were not trivial then $h^0=0$, because the zero locus of $\mathcal J(\b_L^c)$ cannot not be contained in the base locus of all translates of $L$. Therefore $\yhwidehat{L\otimes \mathcal J(\b_L^c)}=0$, hence $L\otimes \mathcal J(\b_L^c)=0$. Since this is impossible
$\mathcal J(\b_L^c)$ is trivial.

\smallskip Let us turn to the second assertion of the Theorem. Assume that $\mathcal J(\b_L)$ is non-trivial. We will prove that  the zero locus of $\b_L$ has divisorial components. To begin with, we recall  that: \emph{ There exists a smooth projective variety $X$, and a generically finite morphism $f: X\to A$ such that 
$L\otimes \J(\b_L)$ is a direct summmand of $f_*\omega_X$. } Let us review this. In fact 
\[\mathcal J(\b_L)=\mathcal J\left(\frac{1}{k}E\right)
\]
where $k>1$  and  $E=E_1+\cdots+E_k$, where the $E_i$'s are general divisors in $|L|$, see \cite[Prop. 9.2.22, and Prop. 9.2.26]{laz2}. Thus $E$ is a divisor in $|kL|$  and it follows from the work of  Esnault-Viehweg (\!\!\cite[(3.13)]{ev}) that
 the sheaf $\mathcal J(\frac{1}{k}E)\otimes L$ is a direct summand of the pushforward of a canonical bundle. A very readable account of this is  \cite[{\it Case $t=1$} p.226]{dh}.
 
 Thus, by the theorem of Chen-Jiang  plus Remark (d) of Subsection \ref{subs:cy}, the (indecomposable!) sheaf $\mathcal J(\frac{1}{k}E)\otimes L$ has a Chen-Jiang decomposition. Therefore:  
there exist a surjective homomorphism of abelian varieties $\pi_{ B}:A\rightarrow B$ with connected fibers, a M-regular sheaf $\G$ on $B$,  and a finite order point $\beta\in \widehat A$ such that
 \begin{equation}\label{eq:1} 
 \mathcal J(\b_L)\otimes L\cong P_\beta\otimes \pi_{ B}^*\G.
 \end{equation}
 The base ideal $\b_L$ is $K(L)$-invariant. Hence also the multiplier ideal $\J(\b_L)$ is $K(L)$-invariant, because, according \cite[\S 3.3]{kol2} one can take a $K(L)$-invariant log resolution of $\b_L$. Therefore, by Lemma \ref{lemma}, $H^0(A, \mathcal J(\b_L)\otimes L\otimes P_\alpha)=0$ for $\alpha$ general in $\widehat A$. Hence the GV sheaf $J(\b_L)\otimes L$, is not M-regular (item (b) of Subsection \ref{subs:van}). Thus $\dim B<\dim A$ in (\ref{eq:1}). 
 
 Since $\G$ is a M-regular sheaf on $B$ we have that $V^0(B,\F)=\widehat B$, and therefore it follows from (\ref{eq:1}) and projection formula that 
 \[V^0(A,L\otimes \J(\b))=i_{\widehat B}(\widehat B)-\beta
 \]
 where 
 $
 i_{\widehat B}:\widehat B\rightarrow \widehat A
 $
  is the inclusion dual to the surjective homomorphism $\pi_{B}$.  
 In (\ref{eq:1}) one can assume $\beta=\hat 0$. Indeed, since $\b_L\subset\J(\b_L)$, $H^0(A,L\otimes\J(\b_L))=H^0(A,L)$, and therefore $\hat 0\in V^0(A,L\otimes \J(\b))= i_{\widehat B}(\widehat B)-\beta$ i.e. $\beta\in i_{\widehat B}(\widehat B)$. Hence we can assume that (\ref{eq:1}) takes the form
  \begin{equation}\label{eq:2} 
 \J(\b_L)\otimes L\cong  \pi_{B}^*\G.
 \end{equation}
 Next, we claim that
   \begin{equation}\label{eq:7} 
 \G\cong {\pi_B}_*L \>.
  \end{equation}
  To prove this,  we apply the symmetric Fourier-Mukai transform to (\ref{eq:2}). Using (\ref{1}) we have
      \begin{equation}\label{eq:5} 
  \yhwidehat{ \J(\b_L)\otimes L}={i_{\widehat B}}_*\widehat \G
  \end{equation} 
    (where $\widehat \G$ is understood to be $FM_B(\G)$). On the other hand, we claim that from Lemma \ref{lemma} it follows that
 \begin{equation}\label{eq:3} 
  \yhwidehat{ \J(\b_L)\otimes L}={i_{\widehat B}}_*i_{\widehat B}^*\widehat L \>.
  \end{equation}
   Indeed applying the functor $FM_A$ to the inclusion $\J(\mk{b})\otimes L\hookrightarrow L$ one gets a surjection  
    \begin{equation}\label{eq:4} 
   \widehat{ L}\rightarrow \yhwidehat{ \J(\mk{b})\otimes L}\rightarrow  0.
 \end{equation}
 As recalled in Subsection \ref{subs:van}, the fiber  $\yhwidehat{ \J(\b_L)\otimes L }(\alpha)= \widehat G(\alpha)$ over a point  $\alpha\in \widehat B$ is identified    to the vector space $
     H^0(A, L\otimes\J(\b_L)\otimes P_{\alpha})^\vee
    $ which, by Lemma \ref{lemma}, is equal 
to $H^0(A, L\otimes P_{\alpha})^\vee$, i.e. the fiber of $\widehat{L }(\alpha)$ over the point  $\alpha$. By Nakayama's lemma it follows that
the surjection  (\ref{eq:4}), restricted to $i_{\widehat B}(\widehat B)$, is the identity. This proves (\ref{eq:3}).
 
 Comparing (\ref{eq:3}) and (\ref{eq:5}) we get
   \begin{equation}\label{eq:6} 
 \widehat \G\cong i_{\widehat B}^*\widehat L
  \end{equation}
Finally, applying the functor $FM_{\widehat A}$ to (\ref{eq:6}) and using (\ref{2}) we get  what claimed, i.e. (\ref{eq:7}). 

  Finally, plugging (\ref{eq:7}) into (\ref{eq:2}) we get
  \[L\otimes \J(\b_L)\cong \pi_B^*{\pi_B}_*L
  \]
 Since $\pi_B^*{\pi_B}_*L$ is locally free, $\J(\b_L)$ must be a line bundle. Since $\J(\b_L)$ is assumed to be non-trivial, $\b_L$ has some divisorial components. This proves one direction of the second assertion of the Theorem.
  
  Conversely, if $L$ is ample and $\b_L$ has some divisorial components it is known that $A\cong B\times C$, where $B$ is a principally polarized abelian variety, $C$ is an abelian variety, and  $L=\OO_B(\Theta_B)\boxtimes L_C$, where $L_C$ is an ample line bundle on $C$ without base divisors (\!\!\cite[Theorem 4.3.1]{bl}). It follows that $\b_L=\OO_B(-\Theta_B)\boxtimes \b_{L_C}$. Hence, by the above, $\J(\b_L)=p_B^*\OO_B(-\Theta_B)$.  This concludes the proof of the Theorem.
  
  \subsection{Proof of the Corollary} We prove more generally the following
  \begin{corollary}\label{cor} Let $L$ be an effective line bundle on an abelian variety and let $H$ be a finite subgroup of $A$. Then 
  \[
  lct \left(\sum_{h\in H}t_h^{-1}\b_{L}\right)\ge 1
  \]
   and equality holds if and only if the zero scheme of $\sum_{h\in H}t_h^{-1}\b_{L}$ has some divisorial components.
  \end{corollary}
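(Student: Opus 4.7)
The plan is to deduce Corollary~\ref{cor} from Theorem~\ref{th:a} by combining the monotonicity of multiplier ideals with an adaptation of the Chen-Jiang argument used in the Theorem's proof. Let $\mathfrak{a}:=\sum_{h\in H}t_h^{-1}\mathfrak{b}_L$. Since $\mathfrak{b}_L$ is one of the summands, $\mathfrak{b}_L\subseteq\mathfrak{a}$ and by monotonicity $\mathcal J(\mathfrak{a}^c)\supseteq\mathcal J(\mathfrak{b}_L^c)$ for every rational $c>0$, so Theorem~\ref{th:a} gives $lct(\mathfrak{a})\ge lct(\mathfrak{b}_L)\ge 1$. The ``if'' direction of the equality statement is the general principle that an ideal sheaf whose zero locus has a divisorial component has log canonical threshold $\le 1$, which combined with the above forces equality.

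For the ``only if'' direction I would assume $\mathcal J(\mathfrak{a})$ is non-trivial and aim to produce divisorial components in the zero locus $\bigcap_{h\in H}t_h^{-1}\mathrm{Bs}(L)$ of $\mathfrak{a}$. First reduce to the case where $L$ is ample exactly as in the Theorem: if $L=\pi^*M$ with $\pi\colon A\to B=A/K_0(L)$ having connected fibres and $M$ ample on $B$, then $\pi\circ t_h=t_{\pi(h)}\circ\pi$ gives $\mathfrak{a}=\pi^{-1}\bigl(\sum_{h'\in\pi(H)}t_{h'}^{-1}\mathfrak{b}_M\bigr)$ and \cite[Example 9.5.45]{laz2} reduces everything to $B$. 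With $L$ ample, observe that $\mathfrak{a}$ is invariant under the finite subgroup $K:=K(L)+H\subseteq A$: the $H$-invariance is automatic from the definition, and for $k\in K(L)$ one has $t_k^{-1}\mathfrak{b}_L=\mathfrak{b}_L$ (since $|t_k^*L|=|L|$), so $t_k^{-1}\mathfrak{a}=\mathfrak{a}$. In particular $\mathcal J(\mathfrak{a})$ is $K(L)$-invariant, so Lemma~\ref{lemma} applied to $(\mathcal J(\mathfrak{a}),L)$ yields $H^0(A,L\otimes\mathcal J(\mathfrak{a})\otimes P_\alpha)=0$ for generic $\alpha\in\widehat A$ (the zero locus of $\mathcal J(\mathfrak{a})$ cannot be contained in the base locus of every translate of $L$); hence $L\otimes\mathcal J(\mathfrak{a})$ is not M-regular.

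Next I would follow the Chen-Jiang/Esnault-Viehweg step in the proof of Theorem~\ref{th:a}, adapted to $\mathfrak{a}$: choose an ample line bundle $N$ making $\mathfrak{a}\otimes N$ globally generated, take $k\gg 0$ general sections $g_i\in H^0(A,\mathfrak{a}\otimes N)$ and set $E:=\sum_i\mathrm{div}(g_i)\in|kN|$; then $\mathcal J(\mathfrak{a})=\mathcal J(\tfrac1k E)$ by \cite[Prop.~9.2.26]{laz2} and Esnault-Viehweg \cite[(3.13)]{ev} realises $\mathcal J(\mathfrak{a})\otimes N$ as a direct summand of $f_*\omega_X$ for some generically finite $f\colon X\to A$. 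The theorem of Chen-Jiang then gives a Chen-Jiang decomposition of $\mathcal J(\mathfrak{a})\otimes N$, and indecomposability (rank-$1$ torsion-free) collapses it to a single term
\[
\mathcal J(\mathfrak{a})\otimes N\;\cong\;P_\beta\otimes\pi_{B_0}^*\mathcal G
\]
for some surjective $\pi_{B_0}\colon A\to B_0$ with connected fibres, an M-regular $\mathcal G$ on $B_0$ and a finite-order $\beta\in\widehat A$. The main obstacle is the final unwinding from this decomposition to the statement that $\mathcal J(\mathfrak{a})$ is a line bundle (equivalently, that the zero locus of $\mathfrak{a}$ has divisorial components). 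In Theorem~\ref{th:a} that last step used Lemma~\ref{lemma} applied to $L$ itself, exploiting that $\mathfrak{b}_L$ is automatically $K(L)$-invariant and matches the theta group $\mathcal G(L)$; here $\mathfrak{a}$ is only $K=K(L)+H$-invariant, while the auxiliary $N$ needed for global generation typically has $K(N)\supsetneq K$, so the two sides of the argument do not line up as cleanly. A natural resolution is to choose $N$ as the pullback of a suitably ample line bundle under the quotient $A\to A/K$, so that $K(N)$ is close to $K$ and Lemma~\ref{lemma} applies to $(\mathcal J(\mathfrak{a}),N)$ in parallel with $(\mathcal J(\mathfrak{a}),L)$; non-M-regularity of $N\otimes\mathcal J(\mathfrak{a})$ then forces $\dim B_0<\dim A$, and the symmetric Fourier-Mukai unwinding of the Theorem, run with $N$ in place of $L$, identifies $\mathcal G\cong\pi_{B_0*}N$ and $\mathcal J(\mathfrak{a})\otimes N\cong\pi_{B_0}^*\pi_{B_0*}N$, which is locally free, so $\mathcal J(\mathfrak{a})$ is a line bundle.
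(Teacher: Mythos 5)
Your reduction to the ample case, the monotonicity argument for $lct\ge 1$ (using $\mathfrak{b}_L\subseteq\sum_{h\in H}t_h^{-1}\mathfrak{b}_L$ and the Theorem), and the ``if'' half of the equality statement are all correct. The gap is in the ``only if'' half, where you propose to rerun the entire Chen--Jiang/Fourier--Mukai argument for $\mathfrak{a}=\sum_{h\in H}t_h^{-1}\mathfrak{b}_L$ with an auxiliary ample $N$ making $\mathfrak{a}\otimes N$ globally generated. The step that fails is precisely the one you flag: Lemma~\ref{lemma} applied to the pair $(\mathcal J(\mathfrak{a}),N)$ requires $\mathcal J(\mathfrak{a})$ to be $K(N)$-invariant, and your proposed choice of $N$ does not achieve this. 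If $N=q^*N'$ for the quotient $q\colon A\to A/K$, then $K(N)=q^{-1}(K(N'))$, which equals $K$ only when $N'$ is a principal polarization on $A/K$; such an $N'$ need not exist, and even when it does it will generally not make $\mathfrak{a}\otimes N$ globally generated, while replacing $N'$ by a power enlarges $K(N)$ again. Since $\mathfrak{a}$ is only $(K(L)+H)$-invariant, the dichotomy ``$H^0(A,t_y^*N\otimes\mathcal J(\mathfrak{a}))$ is zero or everything'' is unavailable, and with it both the non-M-regularity conclusion and the Nakayama step identifying $\widehat{\mathcal G}$ with $i_{\widehat{B_0}}^*\widehat N$ collapse. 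As written, the ``only if'' direction is therefore not proved.

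The fix --- and the paper's actual proof --- is much shorter and avoids redoing the Theorem altogether. Set $K:=\varphi_L(H)\subseteq\widehat A$ and let $\pi_K\colon B\to A$ be the isogeny dual to the quotient $\widehat A\to\widehat A/K$. Then
\[
H^0(B,\pi_K^*L)\cong\bigoplus_{\alpha\in K}\pi_K^*H^0(A,L\otimes P_\alpha),
\qquad\text{hence}\qquad
\mathfrak{b}_{\pi_K^*L}=\pi_K^{-1}\Bigl(\sum_{h\in H}t_h^{-1}\mathfrak{b}_L\Bigr),
\]
so the sum of translated base ideals becomes the base ideal of a \emph{single} complete linear system upstairs. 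Since multiplier ideals commute with the \'etale map $\pi_K$ (\cite[Example 9.5.44]{laz2}), both the bound $lct\ge 1$ and the characterization of equality in terms of divisorial components follow by applying the Theorem to $\pi_K^*L$ on $B$. I would replace your second and third paragraphs with this isogeny reduction.
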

  \begin{proof} The argument is standard and similar to \cite[Corollary 8]{acp}. Let $\varphi_L: A\rightarrow \Pic0 A$ be the homomorphism associated to $L$ and let $K:=\varphi_L(H)$. Let us consider the isogeny 
  \[
  f: \Pic0 A\rightarrow (\Pic0 A)/K
  \]
   and let 
   \[
   \pi_K:B\rightarrow A
   \]
    be the dual isogeny. Then 
    \[H^0(B,\pi_K^*L)\cong\bigoplus_{\alpha\in K}\pi_K^*H^0(A,L\otimes P_\alpha).
    \]
  Therefore  \[\b_{\pi_K^*L}=\pi_K^{-1}\left(\sum_{\alpha\in K}\b_{L\otimes P_\alpha}\right)=\pi_K^{-1}\left(\sum_{h\in H}t_h^{-1}\b_L\right).
     \] 
     Corollary \ref{cor} follows from the Theorem because  $\J\!\left(\b_{\pi_K^*L}\right)=\pi_K^{-1}\J\!\left(\sum_{h\in H}t_h^{-1}\b_L\right)$    (\!\!\cite[Example 9.5.44]{laz2}).  \end{proof}

\providecommand{\bysame}{\leavevmode\hbox
to3em{\hrulefill}\thinspace}

\end{document}